\documentclass[12pt,leqno,oneside]{amsart}
\usepackage{mathrsfs,dsfont}
\usepackage{amsmath,amstext,amsthm,amssymb,bbm}
\usepackage{comment}
\usepackage{charter}
\usepackage{typearea}
\usepackage{pdfsync}
\usepackage[width=6.5in,height=8.8in]{geometry}

\def\bt{\begin{thm}}
\def\et{\end{thm}}
\def\bl{\begin{lem}}
\def\el{\end{lem}}
\def\bd{\begin{defi}}
\def\ed{\end{defi}}
\def\bc{\begin{cor}}
\def\ec{\end{cor}}
\def\bp{\begin{proof}}
\def\ep{\end{proof}}
\def\br{\begin{rem}}
\def\er{\end{rem}}

\newtheorem{thm}{Theorem}[section]
\newtheorem{prop}[thm]{Proposition}
\newtheorem{lem}[thm]{Lemma}
\newtheorem{defn}[thm]{Definition}
\newtheorem{example}[thm]{Example}
\newtheorem{rem}[thm]{Remark}
\newtheorem{cor}[thm]{Corollary}

\numberwithin{equation}{section}

\newcommand{\C}{\Bbb{C}^m}
\newcommand{\pn}{\mathcal{P}_n}

\newcommand{\norm}{\|\varphi_n\|_{\mathscr{C}^3,B_R}}
\newcommand{\bthm}{\begin{thm}}
\newcommand{\ethm}{\end{thm}}
\newcommand{\bstp}{\begin{stp}}
\newcommand{\estp}{\end{stp}}
\newcommand{\blemma}{\begin{lemma}}
\newcommand{\elemma}{\end{lemma}}
\newcommand{\bprop}{\begin{prop}}
\newcommand{\eprop}{\end{prop}}
\newcommand{\bpf}{\begin{pf}}
\newcommand{\epf}{\end{pf}}
\newcommand{\bdefn}{\begin{defn}}
\newcommand{\edefn}{\end{defn}}
\newcommand{\brk}{\begin{rmrk}}
\newcommand{\erk}{\end{rmrk}}
\newcommand{\bcrl}{\begin{crl}}
\newcommand{\ecrl}{\end{crl}}

\title[Asymptotic normality of random zeros]{Asymptotic normality of linear statistics of\\ zeros of random polynomials}
\author{Turgay Bayraktar}
\date{\today}

\keywords{Central limit theorem, linear statistics, zeros of random polynomials, Bergman kernel asymptotics}
\subjclass[2000]{32A60,32A25,60F05,60D05}

\address{Mathematics Department, Syracuse University, NY, USA}
\email{tbayrakt@syr.edu}

\begin{document}

\begin{abstract}
In this note, we prove a central limit theorem for smooth linear statistics of zeros of random polynomials which are linear combinations of orthogonal polynomials with iid standard complex Gaussian coefficients. Along the way, we obtain Bergman kernel asymptotics for weighted $L^2$-space of polynomials endowed with varying measures of the form $e^{-2n\varphi_n(z)}dz$ under suitable assumptions on the weight functions $\varphi_n$.
\end{abstract}

\maketitle
\section{Introduction}

Let $\varphi_n:\C\to \Bbb{R}$ be a sequence of weight functions satisfying 
\begin{equation}\label{growth}
\varphi_n(z)\geq (1+\epsilon)\log|z|\ \text{for}\ z\in \C\setminus B_R
\end{equation} where $\epsilon>0$ and $B_R:=\{z\in\C:|z|\leq R\}$ for some $R\gg1.$ 
We define a norm on the space $\mathcal{P}_n$ of polynomials of degree at most $n$ by 
\begin{equation}\label{n}
\|f_n\|_{\varphi_n}^2:=\int_{\C}|f_n(z)|^2e^{-2n\varphi_n}dV_m(z)
\end{equation} where $dV_m$ denotes the Lebesgue measure on $\C$. For a fixed ONB $\{P_j^n\}_{j=1}^{d_n}$ of $\mathcal{P}_n$ with respect to the norm (\ref{n}) we consider \textit{Gaussian random polynomials}
$$f_n(z)=\sum_{j=1}^{d_n}c_jP_j^n(z)$$ where $c_j$ are iid complex Gaussian random variables of mean zero and variance one and $d_n:=\dim(\mathcal{P}_n)$. We endow $\mathcal{P}_n$ with the $d_n$-fold product measure. Clearly, the induced Gaussian measure on $\mathcal{P}_n$ is independent of the choice of the ONB.

\par In this note, we consider asymptotic normality of zeros of random polynomials 
$f_n(z).$ Namely, for a fixed smooth test form $\Phi\in \mathcal{D}^{m-1,m-1}(\C)$ supported in the bulk $\mathcal{B}_n$ (see \ref{bulk} for its definition) we consider the random variables
$$\mathcal{Z}_n^{\Phi}(f_n):= \langle [Z_{f_n}],\Phi\rangle $$ where $[Z_{f_n}]$ denotes the current of integration along the zero divisor of $f_n.$  By choosing $R$ large enough we assume that $supp(\Phi)\subset B_R.$ The random variables $\mathcal{Z}_{f_n}$ are often called \textit{linear statistics} of zeros. A form of universality for zeros of random polynomials is central limit theorem (CLT) for linear statistics of zeros, that is 
$$\frac{\mathcal{Z}_n^{\Phi}-\Bbb{E} \mathcal{Z}_n^{\Phi}}{\sqrt{Var[\mathcal{Z}_n^{\Phi}]}}$$
converge in distribution to the (real) Gaussian random variable $\mathcal{N}(0,1)$ as $n\to \infty$. 
Here, $\Bbb{E}$ denotes the expected distribution and $Var$ denotes the variance of the random variable $\mathcal{Z}^{\Phi}_n.$
\par In complex dimension one, Sodin and Tsirelson \cite{STr} obtained
asymptotic normality of $Z_n^{\psi}$ for Gaussian analytic functions and a $\mathscr{C}^2$ function $\psi$ by using diagram technique which is a classical approach in probability theory to prove CLT for Gaussian random processes whose variances are asymptotic to zero. More precisely, they  observed that asymptotic normality of linear statistics reduce to Bergman kernel asymptotics (Theorem \ref{ST}).  See also \cite{NaSo2} for a generalization of this result to the case where $\psi$ is merely a bounded function by using a different method. On the other hand, Shiffman and Zelditch \cite{SZ3} pursued the idea of Sodin and Tsirelson and generalized their result to the setting of random holomorphic sections for a positive line bundle $L\to X$ defined over a projective manifold. Building upon ideas from \cite{STr,SZ3} and using the Bergman kernel asymptotics obtained in \S2 we prove a CLT for linear statistics under suitable assumptions on the weight functions $\varphi_n:$ 
\begin{thm}\label{CLT} Let $\Phi$ be a real $(m-1, m-1)$ form with $\mathscr{C}^3$ coefficients such that $\partial\overline{\partial}\Phi\not\equiv0$ and supported in an open set $U$ such that $U\Subset \mathcal{B}_n$ for every $n.$ Let $\varphi_n$ be a sequence of $\mathscr{C}^3$ weight functions satisfying (\ref{growth}) for each $n$ and fixed $R\gg1.$ We assume that
\begin{itemize}
 \item[(1)] $\sup_n\max_{z\in B_R}|\varphi_n(z)|<\infty$ and $\sup_n\max_{z\in B_R}\|d\varphi_n(z)\|<\infty.$
\item[(2)] $\norm=o(\frac{\sqrt{n}} {\log^3 n})$ as $n\to \infty.$
\item[(3)] there exist constants $A,c>0$ such that
$$ci\partial\overline{\partial} |z|^2\leq i\partial\overline{\partial} \varphi_n\leq Ai\partial\overline{\partial} |z|^2 $$
 on $ \overline{U}.$
\end{itemize}
Then linear statistics
$$\frac{\mathcal{Z}_n^{\Phi}-\Bbb{E} \mathcal{Z}_n^{\Phi}}{\sqrt{Var[\mathcal{Z}_n^{\Phi}]}}$$ converge in distribution to the (real) Gaussian $\mathcal{N}(0,1)$ as $n\to \infty.$
\end{thm}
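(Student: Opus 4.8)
The plan is to follow the scheme of Sodin--Tsirelson \cite{STr} and Shiffman--Zelditch \cite{SZ3}: rewrite the centered linear statistic as a smooth linear statistic of the logarithm of a normalized Gaussian field, and then verify the hypotheses of a Sodin--Tsirelson type CLT criterion (Theorem \ref{ST}) using the weighted Bergman kernel asymptotics of Section 2. Concretely, let $K_n(z,w)=\sum_{j=1}^{d_n}P_j^n(z)\overline{P_j^n(w)}$ be the reproducing kernel of $(\mathcal{P}_n,\|\cdot\|_{\varphi_n})$ and put $u_n(z):=f_n(z)/\sqrt{K_n(z,z)}$, which for each fixed $z$ is a standard complex Gaussian and, as a field, has covariance the normalized Bergman kernel $\rho_n(z,w):=K_n(z,w)/\sqrt{K_n(z,z)K_n(w,w)}$ (the weight factors cancelling in the ratio). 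By the Poincar\'e--Lelong formula $[Z_{f_n}]=\frac{i}{2\pi}\partial\overline{\partial}\log|f_n|^2$, integration by parts, and $\log|f_n|^2=\log|u_n|^2+\log K_n(z,z)$, the deterministic term $\log K_n$ drops out of the fluctuation:
$$\mathcal{Z}_n^{\Phi}-\mathbb{E}\,\mathcal{Z}_n^{\Phi}=\int_U\big(\log|u_n(z)|^2-\mathbb{E}\log|u_n(z)|^2\big)\,d\mu(z)=:\nu_n,\qquad \mu:=\tfrac{1}{2\pi}\,i\partial\overline{\partial}\Phi,$$
a compactly supported signed measure with $\mathscr{C}^1$ density $\psi$ relative to $dV_m$. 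Thus it suffices to prove $\nu_n/\sqrt{\mathrm{Var}[\nu_n]}\Rightarrow\mathcal{N}(0,1)$, and the law of $\nu_n$ depends only on $\rho_n$ on $U\times U$.

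Next I would pass to cumulants. Expanding $s\mapsto\log|s|^2$ in its Wiener--It\^o series yields, for jointly standard complex Gaussians with correlation $\rho$, the identity $\mathrm{Cov}(\log|s_1|^2,\log|s_2|^2)=\sum_{k\ge1}|\rho|^{2k}/k^2$, and the diagram technique of \cite{STr,SZ3} expresses the $r$-th cumulant $C_r(\nu_n)$ as a finite sum, over connected diagrams on $r$ vertices, of integrals over $U^r$ of products of factors $\rho_n(z_i,z_j)$ (and conjugates) against $\mu^{\otimes r}$, the number of diagrams being bounded by a universal function of $r$. By the criterion behind Theorem \ref{ST}, the CLT follows once one shows (i) $\mathrm{Var}[\nu_n]=C_2(\nu_n)\asymp n^{-m}$, the lower bound using a two-sided near-diagonal asymptotic for $\rho_n$ together with the hypothesis $\partial\overline{\partial}\Phi\not\equiv0$, and (ii) the uniform estimates $\sup_{z\in U}\int_U|\rho_n(z,w)|^{s}\,dV_m(w)\le C_s\,n^{-m}$ for every $s\ge1$ and a global off-diagonal bound $|\rho_n(z,w)|\le C\exp(-c\sqrt{n}\,|z-w|)$ on $\overline U$. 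Granting these, in a connected diagram one vertex is integrated against the finite measure $\mu$ while each of the remaining $r-1$ vertex integrations, after bounding the non-spanning edges by $|\rho_n|\le1$ and using (ii) on the rest, contributes a factor $O(n^{-m})$; hence $C_r(\nu_n)=O(n^{-m(r-1)})$, so $C_r(\nu_n)/\mathrm{Var}[\nu_n]^{r/2}=O\big(n^{-m(r/2-1)}\big)\to0$ for every $r\ge3$, and since the limiting cumulant sequence is that of a moment-determinate Gaussian, $\nu_n/\sqrt{\mathrm{Var}[\nu_n]}\Rightarrow\mathcal{N}(0,1)$.

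It remains to establish (i)--(ii), which is where Section 2 enters. Hypotheses (1) and (2)---uniform control of $\varphi_n,d\varphi_n$ on $B_R$, and $\|\varphi_n\|_{\mathscr{C}^3,B_R}=o(\sqrt{n}/\log^3 n)$---are exactly what is needed for the weighted Bergman asymptotics of Section 2 to hold on $\overline U\Subset\mathcal{B}_n$ with the requisite uniformity and error control: after rescaling by $1/\sqrt{n}$, $\rho_n$ converges to the model (Bargmann--Fock) normalized kernel. Hypothesis (3), the two-sided estimate $c\,i\partial\overline{\partial}|z|^2\le i\partial\overline{\partial}\varphi_n\le A\,i\partial\overline{\partial}|z|^2$ on $\overline U$, then turns this into two-sided near-diagonal bounds $c_1 e^{-C_1 n|z-w|^2}\le|\rho_n(z,w)|^2\le c_2 e^{-C_2 n|z-w|^2}$ for $|z-w|$ below a fixed threshold, plus the Agmon-type decay above; these give (ii) with constants uniform in $z$ and $n$, and yield $\mathrm{Var}[\nu_n]=(1+o(1))\,n^{-m}\!\int_U\gamma_n(z)\,dV_m(z)$ with $\gamma_n\ge0$ comparable to $\psi(z)^2$, so $\partial\overline{\partial}\Phi\not\equiv0$ (i.e.\ $\psi\not\equiv0$) forces the lower bound in (i).

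The main obstacle is this last step. One must convert the Section 2 asymptotics---whose errors are controlled only through the $\mathscr{C}^3$-smallness in hypothesis (2)---into the sharp uniform estimates (ii) and, above all, into genuine two-sided control of $\mathrm{Var}[\nu_n]$, which is the infinite series $\sum_{k\ge1}k^{-2}\iint\psi(z)\psi(w)|\rho_n(z,w)|^{2k}\,dV_m(z)\,dV_m(w)$: the near-diagonal region must produce the leading term $\asymp n^{-m}\int\psi^2$ without cancellation (this is where $\partial\overline{\partial}\Phi\not\equiv0$ and hypothesis (3) are essential, the latter keeping the Gaussian width and the Hessian factor bounded above and below), while the range where $\rho_n$ is not small, handled by the off-diagonal decay, must contribute only lower-order terms---all uniformly in $n$ so that the normalization $\nu_n/\sqrt{\mathrm{Var}[\nu_n]}$ behaves well. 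By comparison, the combinatorial cumulant bookkeeping of the previous step is routine once the kernel estimates are in hand.
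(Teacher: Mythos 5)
Your proposal follows essentially the same route as the paper: reduce via Poincar\'e--Lelong to the linear statistic of $\log|u_n|$ for the normalized Gaussian process with covariance $\rho_n$, and apply the Sodin--Tsirelson criterion (Theorem \ref{ST}), verifying its two hypotheses with the on-, near-, and off-diagonal Bergman kernel asymptotics of Section 2 (Theorems \ref{thm1}, \ref{thm3}, \ref{thm2}). The only caveats are that the paper invokes Theorem \ref{ST} as a black box rather than re-running the cumulant/diagram expansion, and that your claimed two-sided bound $c_1e^{-C_1 n|z-w|^2}\le|\rho_n(z,w)|^2$ is only available (and only needed) in the scaling window $|z-w|\lesssim \log n/\sqrt{n}$, not up to a fixed threshold.
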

Theorem \ref{CLT} generalizes the results of \cite{STr,SZ3} to the present setting. We remark that in the present setting the zero currents themselves may not converge almost surely to a deterministic current. It follows from \cite[\S 6]{BloomL} (see also \cite{BloomS} for unweighted case and \cite{BloomL,B6,B7} for more general distributions) that in the presence of a single weight  (i.e. $\varphi=\varphi_n$ for all $n$), normalized zero currents $\frac1n[Z_{f_n}]$ converge almost surely to equilibrium current  $\frac{i}{\pi}\partial\overline{\partial}\varphi^{e}$ in the sense of currents on $\C$ (see \ref{ex} for definition of $\varphi^{e})$. The latter current coincides with the weighted equilibrium measure (cf. \cite{SaffTotik}) of the compact set defined by (\ref{sup}) in complex dimension one. In this special case, our methods allows us to obtain a CLT for a $\mathscr{C}^2$ weight function in the bulk thanks to the Bergman kernel asymptotics (see Remark \ref{rem}). 
\begin{thm}\label{CLT2}
Let $\varphi_n=\varphi:\C \to\Bbb{R}$ be a $\mathscr{C}^2$ weight function satisfying (\ref{growth}) and $\Phi$ be a real $(m-1, m-1)$ form with $\mathscr{C}^3$ coefficients such that $\partial\overline{\partial}\Phi\not\equiv0$ and supported in the interior of the Bulk $\mathcal{B}.$ Then linear statistics
$$\frac{\mathcal{Z}_n^{\Phi}-\Bbb{E} \mathcal{Z}_n^{\Phi}}{\sqrt{Var[\mathcal{Z}_n^{\Phi}]}}$$ converge in distribution to the (real) Gaussian $\mathcal{N}(0,1)$ as $n\to \infty.$
\end{thm}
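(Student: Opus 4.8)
We indicate how Theorem~\ref{CLT2} follows from the same cumulant (diagram) argument that proves Theorem~\ref{CLT}. The observation is that hypotheses~(2),~(3) and the $\mathscr{C}^3$-regularity enter that proof \emph{only} to produce the near- and off-diagonal Bergman kernel asymptotics of \S2; for a single $\mathscr{C}^2$ weight these asymptotics are available on the interior of the bulk $\mathcal{B}$ from weighted potential theory, and the analogues of~(2),~(3) need not be imposed, because on $\mathcal{B}$ one automatically has $\varphi=\varphi^e$ and hence $c\, i\partial\overline{\partial}|z|^2\le i\partial\overline{\partial}\varphi\le C\, i\partial\overline{\partial}|z|^2$ (the equilibrium density being positive in the bulk), while a single weight trivially satisfies the variation bound~(2).

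\emph{Step 1 (Poincar\'e--Lelong reduction).} Let $\Pi_n(z,w)=\sum_j P_j^n(z)\overline{P_j^n(w)}$ be the reproducing kernel of $(\mathcal{P}_n,\|\cdot\|_{\varphi})$, so that $\Bbb{E}[f_n(z)\overline{f_n(w)}]=\Pi_n(z,w)$, and put $\widetilde f_n(z):=f_n(z)/\sqrt{\Pi_n(z,z)}$, a Gaussian field of unit pointwise variance with correlation kernel $\rho_n(z,w)=\Pi_n(z,w)/\sqrt{\Pi_n(z,z)\Pi_n(w,w)}$. Since $[Z_{f_n}]=\tfrac{i}{\pi}\partial\overline{\partial}\log|f_n|$ and $\Phi$ is compactly supported, integration by parts gives
\[
\mathcal{Z}_n^{\Phi}-\Bbb{E}\,\mathcal{Z}_n^{\Phi}=\frac{i}{\pi}\int_{\C}\bigl(\log|\widetilde f_n(z)|-\Bbb{E}\log|\widetilde f_n(z)|\bigr)\,\partial\overline{\partial}\Phi(z)=:Y_n,
\]
the deterministic term $\tfrac12\log\Pi_n(z,z)$ being absorbed into the expectation. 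Thus it suffices to show $Y_n/\sqrt{\mathrm{Var}(Y_n)}\Rightarrow\mathcal{N}(0,1)$.

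\emph{Step 2 (diagram technique).} Following \cite{STr} as in \cite{SZ3}: since $\widetilde f_n$ is a smooth unit-variance Gaussian field, $\log|\widetilde f_n|$ admits an explicit chaos expansion with universal coefficients, and the $p$-th cumulant of $Y_n$ is a finite sum, over connected diagrams $\Gamma$ on $p$ vertices, of integrals over $(\mathrm{supp}\,\Phi)^p$ of products of factors $\rho_n(z_i,z_j)$ and $\overline{\rho_n(z_i,z_j)}$ along the edges of $\Gamma$ against $\prod_l\partial\overline{\partial}\Phi(z_l)$. Two inputs are then needed: (a) the lower bound $\mathrm{Var}(Y_n)\gtrsim n^{-m}$, coming from the near-diagonal scaling asymptotics of $\Pi_n$ (convergence, after rescaling by $n^{-1/2}$ and normalization, to the Bargmann--Fock kernel with curvature $i\partial\overline{\partial}\varphi$), together with $\partial\overline{\partial}\Phi\not\equiv0$ and positivity of the equilibrium density on $\mathrm{supp}\,\Phi$; and (b) the off-diagonal estimate $|\rho_n(z,w)|\le C e^{-\lambda n|z-w|^2}$ for $z,w$ in a neighbourhood of $\mathrm{supp}\,\Phi$ inside $\mathcal{B}$. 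Granting~(b), in each connected diagram one vertex is free while the remaining $p-1$ are confined to balls of radius $O(n^{-1/2})$, so every diagram integral is $O(n^{-m(p-1)})$; combined with~(a) the normalized $p$-th cumulant is $O\bigl(n^{-m(p-2)/2}\bigr)\to0$ for $p\ge3$, and the cumulant criterion for convergence to a Gaussian applies.

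\emph{Step 3 (Bergman input for a $\mathscr{C}^2$ weight), and the main difficulty.} It remains to establish~(a) and~(b) on the interior of $\mathcal{B}$. There $\varphi$ coincides with its plurisubharmonic envelope $\varphi^e$, which is $\mathscr{C}^{1,1}$, and $i\partial\overline{\partial}\varphi$ is bounded above and below by a positive multiple of $i\partial\overline{\partial}|z|^2$; this is precisely the hypothesis under which H\"ormander $L^2$-estimates for $\overline{\partial}$ and sub-mean-value inequalities yield the Bergman kernel asymptotics of \S2, and none of those arguments uses more than two derivatives of the potential (cf.\ also the equilibrium-measure theory of \cite{BloomL,SaffTotik}). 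Inserting the resulting~(a),~(b) into Step~2 proves Theorem~\ref{CLT2}. The delicate point --- the main obstacle --- is exactly this near- and off-diagonal control of $\rho_n$ uniformly on a neighbourhood of $\mathrm{supp}\,\Phi$ when $\varphi$ is only $\mathscr{C}^2$: it is handled by working with the $\mathscr{C}^{1,1}$ envelope $\varphi^e$ on the bulk interior (so that the genuine local model is Bargmann--Fock) and by noting that the diagram argument requires only Gaussian-type off-diagonal decay, not a full asymptotic expansion of the kernel.
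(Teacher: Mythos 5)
Your proposal is correct in substance and reaches the theorem by the same two ingredients the paper uses: the Sodin--Tsirelson asymptotic-normality mechanism for $\log|G_n|$, and the Bergman kernel asymptotics of \S2 transplanted to a single $\mathscr{C}^2$ weight. The packaging differs in two places. First, the paper does not re-run the diagram/cumulant computation: it quotes Theorem \ref{ST} as a black box and, in the proof of Theorem \ref{CLT}, verifies the two integral conditions (\ref{c1}) and (\ref{c2}) by splitting $\int|\rho_n(z,w)|\,dV_m(w)$ at the scale $|z-w|\sim\frac{\log n}{\sqrt n}$ and using Theorems \ref{thm3} and \ref{thm2}; your inputs (a)--(b) together with the one-free-vertex diagram count are precisely the interior of that black box, so this is a difference of exposition rather than of proof. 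Second, and more substantively, your justification of the $\mathscr{C}^2$ Bergman input is looser than the paper's: it is not accurate that the arguments of \S2 ``use no more than two derivatives'' --- the error term $\epsilon_n=2\|\varphi_n\|_{\mathscr{C}^3,B_R}\frac{(\log n)^3}{\sqrt n}$ explicitly controls a cubic Taylor remainder --- and appealing to the $\mathscr{C}^{1,1}$ envelope $\varphi^e$ buys nothing on the bulk, where $\varphi^e=\varphi$ anyway. The correct (and elementary) mechanism, which is the content of Remark \ref{rem}, is that for a single fixed $\mathscr{C}^2$ weight the rescaled second-order Taylor remainder satisfies $\sup_{|\zeta|\leq R}\big|n\tilde{\varphi}(\zeta/\sqrt{n})-\sum_{j}\lambda_j|\zeta_j|^2\big|\to 0$ in normal coordinates centered at a bulk point, and this quantity can be substituted for $\epsilon_n$ throughout the proofs of Theorems \ref{thm1}, \ref{thm3} and \ref{thm2}; with that substitution (and condition (3) supplied by continuity and compactness, as you note) your argument closes. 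A minor point: the off-diagonal decay actually established in Theorem \ref{thm2} is $e^{-T\sqrt{n}|z-w|}$, not the Gaussian $e^{-\lambda n|z-w|^2}$ you assert, but either estimate suffices since only decay beyond the scale $\frac{\log n}{\sqrt n}$ is needed.
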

\par A generalization of Theorem \ref{CLT} to the line bundle setting will appear in \cite{BCM}.  In the latter setting, the role of $\varphi_n$ is played by a metric. In this general geometric setting, a Johansson type large deviations theorem is obtain by Dinh and Sibony \cite[Corollary 7.4]{DS3} which is generalized by Dinh-Marinescu-Schmidt \cite{DMS1} and Coman-Marinescu-Nguyen \cite{CMN}. These large deviations theorems hold in a very general setting (see e.g. \cite[Theorem 4]{DMS1}). It is enough to have a Hilbert structure on the dual of the space of holomorphic sections (in particular, one doesn't need the metric to be smooth nor positive, we can apply it to singular metrics). 
We also remark that in the present setting due to the growth condition (\ref{growth}) the function $\varphi_n$ does not give rise to a metric on the hyperplane bundle $\mathcal{O}(1)\to \Bbb{CP}^m.$ 

I would like to thank the referee whose suggestions improved the exposition of the paper. 

\section{Bergman kernel asymptotics}
For a continuous weight function $\varphi_n$ satisfying (\ref{growth}) we denote the corresponding \textit{equilibrium potential}
\begin{equation}\label{ex}
\varphi^e_n(z):=\sup\{\psi(z):\psi\in \mathcal{L}(\C), \psi\leq \varphi_n\ \text{on}\  \C\}
\end{equation}
where $\mathcal{L}(\C)$ denote the Lelong class of psh functions. It is well known (cf. \cite[Appendix B]{SaffTotik}) that $\varphi_n^e\in \mathcal{L}_+(\C).$ Hence, it follows from Bedford-Taylor theory that its Monge-Amp\`ere $$MA(\varphi_n^e):=(dd^c\varphi_n^e)^m$$ is well-defined and does not put any mass on pluri-polar sets where $d=\partial+\overline{\partial}$ and $d^c:=\frac{i}{2\pi}(\overline{\partial}-\partial)$ so that $dd^c=\frac{i}{\pi}\partial\overline{\partial}$. Moreover, by  \cite[Appendix B]{SaffTotik} the measure $MA(\varphi_n^e)$ is supported on the set 
\begin{equation}\label{sup}
D_n:=\{z\in\C:\varphi_n^e(z)=\varphi_n(z)\}.
\end{equation}
Note that the set $D_n$ is closed (since $\varphi_n^e$ is usc) and bounded (by (\ref{growth})) and hence compact. By choosing $R$ large enough we may assume that $D_n\subset B_R.$ Furthermore, Berman \cite{Berman} proved that if $\varphi_n$ is $\mathscr{C}^{1,1}$ then
\begin{equation*}
MA(\varphi_n^e)=1_{D_n}\det(dd^c\varphi_n)dV_m(z)
\end{equation*}  where the latter $L^{\infty}_{loc} (m,m)$-form is obtained by point-wise calculation. 

\par The classical example $\varphi(z)=\frac{|z|^2}{2}$ gives $MA(\varphi^e(z))=\mathbbm{1}_{B}dV_m(z)$ where $\mathbbm{1}_{B}$ denotes the characteristic function of the unit ball in $\C.$

\par For a fixed orthonormal basis $\{P^n_j\}_{j=1}^{d_n}$ for $\pn$ with respect to the norm (\ref{n}) the Bergman kernel is given by
$$K_n(z,w):=\sum_{j=1}^{d_n}P_j^n(z)\overline{P_j^n(w)}$$ where $d_n=\dim(\mathcal{P}_n).$
We also denote the \textit{Bergman function} by
$$B_n(z):=K_n(z,z)e^{-2n\varphi_n(z)}=\sum_{j=1}^{d_n}|P^n_j(z)|^2e^{-2n\varphi_n(z)}.$$
Bergman function $B_n$ has the extremal property
$$B_n(z)=\sup_{f\in \pn}\frac{|f_n(z)|^2e^{-2n\varphi_n(z)}}{\|f_n\|_{\varphi_n}^2}.$$ Moreover, we have the following dimensional density property
$$\int_{\C}B_n(z)dV_m(z)=\dim(\pn)=O(n^m).$$

For a $\mathscr{C}^{1,1}$ weight function $\varphi_n$ satisfying the growth condition (\ref{growth}) we define its \textit{bulk} $\mathcal{B}_n$ by
\begin{equation}\label{bulk}
\mathcal{B}_n=\{z\in Int(D_n): \varphi_n\ \text{is smooth near}\ z\ \text{and}\ dd^c\varphi_n(z)>0 \}.
\end{equation} 
 For each compact set $K\subset \C$ we denote the $\mathscr{C}^k$ semi-norm of $\varphi_n$ on $K$ by $$\|\varphi_n\|_{\mathscr{C}^k,K}:=\sup_{z\in K}\{|D_z^{\alpha}\varphi_n(z)|:|\alpha|=k\}.$$

\par By adapting the methods of Berman \cite{Berman} and Berndtsson \cite{Berndtsson2}, we obtain the first order asymptotics of the Bergman kernel  which coincide with the Tian-Zelditch-Catlin expansion for a positive Hermitian holomorphic line bundle (see also \cite{CMM,BCM} for an extension of this result to the line bundle setting).

\begin{thm}\label{thm1}
Let $K$ be a compact set contained in an open set $U$ such that $U\Subset \mathcal{B}_n$ for every $n.$ Let $\varphi_n$ be a sequence of $\mathscr{C}^3$ weight functions satisfying (\ref{growth}) for each $n$ and fixed $R\gg1.$ We assume that
\begin{itemize}
 \item[(1)]  $\sup_n\max_{z\in B_R}|\varphi_n(z)|<\infty$ and $\sup_n\max_{z\in B_R}\|d\varphi_n(z)\|<\infty.$
\item[(2)] $\norm=o(\frac{\sqrt{n}} {\log^3 n})$ as $n\to \infty.$
\item[(3)] there exist constants $A,c>0$ such that
$$ci\partial\overline{\partial} |z|^2\leq i\partial\overline{\partial} \varphi_n\leq Ai\partial\overline{\partial} |z|^2 $$
 on $ \overline{U}.$

\end{itemize}
 Then
\begin{equation}
\frac{n^{-m}K_n(z,z)e^{-2n\varphi_n(z)}}{det(dd^c\varphi_n(z))}\to 1 
\end{equation} uniformly on $K$ as $n\to \infty.$
\end{thm}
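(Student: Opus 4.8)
The plan is to establish the pointwise upper bound $n^{-m}B_n(z)\le \det(dd^c\varphi_n(z))+o(1)$ and the pointwise lower bound $n^{-m}B_n(z)\ge \det(dd^c\varphi_n(z))-o(1)$ separately, uniformly on $K$, following the extremal-function/$L^2$-estimates strategy of Berman and Berndtsson adapted to the varying-weight setting. For the lower bound, I would fix $z_0\in K$, rescale by setting $z=z_0+w/\sqrt n$, and compare $\varphi_n$ near $z_0$ with its quadratic Taylor polynomial $Q_{z_0}(z)$; assumption (1) controls the $0$- and $1$-jets uniformly, assumption (3) makes the Hessian uniformly comparable to the Euclidean one, and assumption (2) is exactly what forces the remainder in the rescaled Taylor expansion to be $o(1)$ in $\mathscr{C}^0$ on balls of radius $\sim\log n$ (the $\log^3 n$ loss being absorbed by the $C^3$-bound decaying like $o(\sqrt n/\log^3 n)$). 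On the model weight $Q_{z_0}$ the extremal value is computed exactly and equals $\det(dd^c\varphi_n(z_0))$ up to the normalization; one then uses Hörmander's $\bar\partial$-estimate (with the uniformly positive weight from (3), plus a cutoff supported in a ball of radius $\log n/\sqrt n$) to correct the model peak section to a genuine global polynomial in $\pn$, the error term being $o(1)$ by the same estimates. This yields $B_n(z_0)\ge n^m\det(dd^c\varphi_n(z_0))(1-o(1))$, uniformly.

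For the upper bound I would use the submean-value property: for a holomorphic $f$, $|f(z_0)|^2e^{-2n\varphi_n(z_0)}$ is controlled by the average of $|f|^2e^{-2n\varphi_n}$ over a small ball, after again replacing $\varphi_n$ by its quadratic model on that ball and paying an $o(1)$ price via assumption (2). Combining with the extremal characterization of $B_n$ gives the matching upper bound. Alternatively — and this is cleaner — one integrates the lower bound: since $\int_{\C}B_n\,dV_m=d_n$ and, by the single-weight theory recalled in \S2, $n^{-m}d_n\to \int MA(\varphi_n^e)=\int_{D_n}\det(dd^c\varphi_n)\,dV_m$ has the right total mass, a pointwise lower bound that already integrates (over a neighborhood of $\overline U$, after an exhaustion/Fatou argument) to the full mass must in fact be an equality a.e., and uniform convergence on $K$ then follows from equicontinuity of the family $n^{-m}B_n$ on $U$ — equicontinuity itself coming from Cauchy estimates together with the uniform upper bound $n^{-m}B_n=O(1)$, which is the easy half of the submean-value argument.

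The main obstacle, and the place where assumptions (1)--(3) are doing real work, is making every estimate \emph{uniform in $n$} despite the weight changing with $n$: one must show the Taylor remainder of $\varphi_n$ at scale $1/\sqrt n$, measured on balls of radius $\log n/\sqrt n$, tends to zero uniformly over $z_0\in K$ and over $n$. This is precisely the content of (2): writing the remainder in terms of third derivatives of $\varphi_n$, one gets a bound of order $\|\varphi_n\|_{\mathscr{C}^3,B_R}\cdot(\log n/\sqrt n)^3\cdot n = \|\varphi_n\|_{\mathscr{C}^3,B_R}\cdot \log^3 n/\sqrt n = o(1)$, and the same bound controls the Hörmander correction and the submean-value comparison. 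A secondary technical point is that $\varphi_n$ is only $\mathscr{C}^3$ (not real-analytic or $\mathscr{C}^\infty$), so the Berman/Berndtsson arguments must be run with just the quadratic model and explicit remainder control rather than a full asymptotic expansion; but since only the leading term is claimed, $\mathscr{C}^3$ suffices. Once the scaling estimates are in place the rest is the standard $\bar\partial$-plus-submean-value dichotomy, and uniformity on the compact $K\subset U\Subset\mathcal B_n$ follows by a routine compactness argument using assumption (3) on the slightly larger set $\overline U$.
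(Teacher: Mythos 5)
Your primary route---the sub-mean-value inequality on polydiscs of radius $\log n/\sqrt n$ for the upper bound, and a H\"ormander/Demailly $\overline{\partial}$-correction of a cutoff peak section for the lower bound, with assumption (2) controlling the rescaled Taylor remainder $\|\varphi_n\|_{\mathscr{C}^3,B_R}(\log n)^3/\sqrt n=o(1)$---is exactly the proof given in the paper. One caveat: the ``cleaner'' alternative you sketch for the upper bound, integrating the lower bound against $\int_{\C}B_n\,dV_m=d_n$, would require the pointwise lower bound on essentially all of $D_n$, whereas it is only established in the bulk $\mathcal{B}_n$ (which may be strictly smaller), so the sub-mean-value argument should remain the actual proof of the upper bound.
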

 
 \begin{proof}
By choosing $R$ large enough we assume that $\overline{U}\subset B_R.$ We fix $z\in K,$ by translation we may assume that $z=0.$ Let $g_n$ be a holomorphic polynomial of degree at most 1 such that 
$$\varphi_n(\eta)=Re(g_n(\eta))+\sum_{j,k}a^n_{jk}\eta_j\overline{\eta}_k+\varphi_{n,3}(\eta).$$  
  Furthermore, applying a unitary change of coordinates we define
 \begin{equation}\label{cord}\tilde{\varphi}_n(\zeta):=\varphi_n(\zeta)-Re(g_n(\zeta))=\sum_{j=1}^m \lambda_j^n|\zeta_j|^2+O(|\zeta|^3)
 \end{equation} where $\lambda_j^n$ are eigenvalues of $dd^c\varphi_n(0).$ We refer to $\zeta$ coordinates introduced in (\ref{cord}) as \textit{normal coordinates} centered at $z$.
 
  Next, we fix $f_n\in \pn$ such that $\|f_n\|_{\varphi_n}=1$ then by sub-mean value inequality applied to the holomorphic function $f_ne^{-ng_n}$, using assumption (2) and applying a change of variables $w=\sqrt{n}\zeta$ we obtain
\begin{eqnarray}
|f_n(0)|^2e^{-2n\varphi_n(0)}=|f_n(0)e^{-ng_n(0)}|^2 &\leq& \frac{1}{\int_{\Delta^m(0,\frac{\log n}{\sqrt{n}})}e^{-2n\tilde{\varphi}_n(\zeta)}dV_m(\zeta)} \nonumber \\
&\leq&  \frac{1}{n^{-m}\int_{\Delta^m(0,\log n)}e^{-2n\tilde{\varphi}_n(\frac{w}{\sqrt{n}})}dV_m(w)} \nonumber \\
&\leq & \frac{\exp(\epsilon_{n})}{n^{-m}\int_{\Delta^m(0,\log n)}e^{-2\sum_{j=1}^m\lambda_j^n|w_j|^2}dV_m(w)} \nonumber\\
&\leq& \frac{n^m\exp(\epsilon_{n})}{(E[\sqrt{c}\log n])^m}\lambda_1^n\dots \lambda^n_m \label{ub}
\end{eqnarray}
where $\epsilon_n:=2\norm \frac{(\log n)^3}{\sqrt{n}}\to 0$ and we used $\lambda_j^n\geq c>0$ together with 
\begin{eqnarray*}
(E[\sqrt{c}\log n])^m &:=& \prod_{j=1}^m\int_{|\zeta_j|\leq \sqrt{c}\log n}e^{-2|\zeta_j|^2}d\zeta_j=(\frac{\pi}{2})^m(1-\exp(-2c(\log n)^2))^m\\
&\leq& \lambda_1^n\dots \lambda_m^n \int_{\zeta\in \Delta^m(0,\log n)}e^{-2\sum_{j=1}^{m}\lambda_j^n|\zeta_j|^2}dV_m.
\end{eqnarray*}
Hence, using the extremal property of $B_n(z)$ we obtain 
\begin{equation}\label{gub}
n^{-m}B_n(0)\leq \frac{\exp(\epsilon_{n})}{(E[\sqrt{c}\log n])^m}\lambda_1^n\dots \lambda^n_m.
\end{equation}
 
 In order to get the lower bound we use a generalization of H\"ormander's  $L^2$-estimates for the solution of $\overline{\partial}$-equation due to Demailly \cite[Theorem 5.1]{Dem82} (see also \cite[Theorem 5.4]{Berman}): 
 Let $\chi$ be a smooth cut-off function such that $0\leq \chi\leq 1,$ it is supported in a polydisc $\Delta^m(0,2\delta)\subset U$ and $\chi\equiv 1$ on $\Delta^m(0,\delta).$ Define $\chi_n(z):=\chi(\frac{z}{r_n})$ where $r_n:=\frac{\log n}{\sqrt{n}}$ then by \cite[Theorem 5.4]{Berman}( see also \cite[Theorem 5.1]{Dem82}) applied on the line bundle $\mathcal{O}(n)\to \Bbb{P}^m$ with the weight function $$\psi_n(z):=(n-C)\varphi^e_n(z)+\frac{C}{2}\log(1+|z|^2)$$ and for $C>\frac{m+1}{1+2\epsilon},$ there exists a smooth function $u_n$ such that 
 $\overline{\partial}u_n=\overline{\partial}\chi_n$ and 
 \begin{equation}
 \int_{\C}|u_n|^2e^{-2\psi_n(\zeta)}\frac{1}{(1+|\zeta|^2)^{m+1}}dV_m \leq \frac{1}{C}\int_{\C}|\overline{\partial}\chi_n(\zeta)|^2 e^{-2\psi_n(\zeta)}\frac{1}{(1+|\zeta|^2)^{m+1}}dV_m(\zeta).
 \end{equation}
Then by using $\varphi^e_n\leq \varphi_n$ on $\C$ and assumption (1) we infer that
\begin{eqnarray}
 \int_{\C}|u_n|^2e^{-2\psi_n}\frac{1}{(1+|\zeta|^2)^{m+1}}dV_m &\geq& \int_{\C}|u_n|^2e^{-2n\varphi_n}\exp(2C\varphi_n-(C+m+1)\log(1+|\zeta|^2))dV_m\nonumber\\
&=&\int_{|z|\geq R}+ \int_{|z|<R}\nonumber\\
 &\geq& C_1 \int_{\C}|u_n|^2e^{-2n\varphi_n}dV_m
 \end{eqnarray}
 where $C_1>0$ is independent of $n.$
 On the other hand, using $\varphi_n^e=\varphi_n$ on the set $U\subset B_R$ where $\chi$ is supported and by assumptions (1) and (2) we obtain 
 \begin{eqnarray}
     \int_{\Delta^m(0,2\delta r_n)}|\overline{\partial}\chi_n(\zeta)|^2 e^{-2\psi_n(\zeta)}\frac{1}{(1+|\zeta|^2)^{m+1}}dV_m(\zeta)
  &\leq &  \frac{C_2}{r^2_n} \int_{\delta r_n<|\zeta|<2\delta r_n}e^{-2\psi_n(\zeta)}\frac{1}{(1+|\zeta|^2)^{m+1}}dV_m(\zeta) \nonumber\\
  &\leq&  \frac{C_3}{r^2_n}  \int_{\delta r_n<|\zeta|<2\delta r_n}e^{-2n\tilde{\varphi}_n}dV_m\nonumber \\ 
  &\leq & \frac{C_3}{r^2_n}  \int_{\delta r_n<|\zeta|<2\delta r_n} \exp(-2nc|\zeta|^2)e^{2n\norm |\zeta|^3}dV_m\nonumber\\
  &\leq& C_4\exp(-2nc\delta^2r_n^2)\exp(8\delta^3\epsilon_n)r_n^{2m-2}\nonumber\\
  &\leq& \frac{C_5(\log n)^{2m-2}}{n^{m+1}}.
 \end{eqnarray}
Hence,
\begin{equation}\label{nu}
\int_{\C}|u_n|^2e^{-2\varphi_n}dV_m\leq  \frac{C_6(\log n)^{2m-2}}{n^{m+1}}.\end{equation}
  \par Next, since $u_n$ is holomorphic in the polydisc $\Delta^m(0,\delta r_n),$ by (\ref{ub}) and (\ref{nu}) we have
 \begin{eqnarray}
|u_n(0)|^2 &\leq&  \frac{n^m\exp(\epsilon_{n})}{(E[\sqrt{c}\delta\log n])^m}\lambda_1^n\dots \lambda^n_m \|u_n\|_{\varphi_n}^2\\
&\leq& \frac{C_5\exp(\epsilon_{n})(\log n)^{2m-2}}{n(E[\sqrt{c}\log n])^m}\lambda_1^n\dots \lambda^n_m. \nonumber
 \end{eqnarray}

\par Now, we define $h_n:=\chi_n-u_n.$ Note that $h_n$ is holomorphic and $\|h_n\|_{\varphi_n}<\infty.$ Using the growth condition (\ref{growth}) one can show that $h_n$ is a polynomial of degree at most $n$ (see \cite[Lemma 5.5]{Berman}). Furthermore,
\begin{eqnarray}
|h_n(0)|^2e^{-2n\varphi_n(0)} &\geq& (1-|u_n(0)|)^2\\
&\geq& \big(1-C_7\frac{(\log n)^{m-1}}{\sqrt{n}})^2. \nonumber
\end{eqnarray}
 On the other hand,
 \begin{eqnarray}
 \|h_n\|_{\varphi_n}^2 &\leq& (\|\chi_n\|_{\varphi_n}+\|u_n\|_{\varphi_n})^2\\
 &\leq& \big[\big(\int_{\Delta^m(0,2\delta r_n)}e^{-2n\varphi_n}dV_m\big)^{\frac12}+\big(\int_{\C}|u_n|^2e^{-2n\varphi_n}dV_m\big)^{\frac12}]^2\nonumber\\
 &\leq& \big[\sqrt{(\frac{\pi}{2})^mn^{-m}\frac{\exp(\epsilon_{n})}{\lambda_1^n\dots\lambda^n_m}}+\sqrt{\frac{C_6(\log n)^{2m-2}}{n^{m+1}}}\big]^2\nonumber\\
 &\leq&(\frac{\pi}{2})^m\frac{n^{-m}\exp(\epsilon_{n})}{\lambda_1^n\dots \lambda^n_m}[1+\sqrt{\frac{C_8(\log n)^{2m-2}}{n}}]^2\nonumber
 \end{eqnarray}
thus, combining these estimates and using extremal property of $B_n$ we obtain
\begin{equation}\label{help}
B_n(0)\geq \frac{|f_n(0)|^2e^{-2n\varphi_n(0)}}{\|f_n\|_{\varphi_n}^2}\geq (\frac{2}{\pi})^m n^m \lambda_1^n\dots\lambda_m^n (1-C_9\frac{(\log n)^{m-1}}{\sqrt{n}})
\end{equation} where $C_j>0$ for $j=1,\dots ,9$ and does not depend on $n$.\\
Finally, using $\det(dd^c(\varphi_n(0))=(\frac{2}{\pi})^m\lambda_1^n\dots\lambda_m^n$ finishes the proof.
  \end{proof}
\begin{rem}\label{rem}
We stress that in the presence of a single $\mathscr{C}^2$ weight function (i.e. $\varphi=\varphi_n$ for all $n$) one can show that the estimates (\ref{gub}) and (\ref{help}) remains valid by using 
$$\sup_{|\zeta|\leq R}\big|n\varphi(\frac{\zeta}{\sqrt{n}})-\sum_{j=1}^n\lambda_j|\zeta_j|^2\big|\to 0$$
where $\zeta$ are coordinates centered at $z\in \mathcal{B}$ as in Theorem \ref{thm1} (see \cite[\S 3]{Berman} for details). In particular, assumptions (1), (2) and (3) are redundant in this special case. The same conclusion also applies to Theorems \ref{thm3} and \ref{thm2}.
\end{rem}
 Now, for fixed $z\in K$ which we identify with the origin as in Theorem \ref{thm1} and letting $\Lambda_n:=diag[\lambda_1^n,\dots,\lambda_m^n]$ whose diagonal entries are the eigenvalues of the curvature form $dd^c\varphi_n(z).$ Using normal coordinates centered at $z,$ it follows from Cauchy-Schwarz inequality, (\ref{gub}) and 
$$\langle u,v\rangle=\frac12(|u|^2+|v|^2-|u-v|^2)+iIm(\langle u,v\rangle) $$
that the holomorphic functions 
$$h_n(u,v):=\frac{n^{-m}K_n(n^{-\frac12}u,n^{-\frac12}\overline{v})\exp(-n(g_n(n^{-\frac12}u)+\overline{g_n(n^{-\frac12}\overline{v})})}{\det\Lambda_n\exp(2\langle \Lambda_n u,\overline{v}\rangle)}$$ are uniformly bounded on the set  $$\Omega:=\{(u,v):|u|,|v|\leq R\}$$ and passing to a subsequence $h_{n_k}$ we may assume that 
$h_{n_k}\to h_{\infty}$ locally uniformly. On the other hand, by Theorem \ref{thm1} we have $h_{\infty}=(\frac{\pi}{2})^m$ on the set $M=\{(u,\overline{u}):|u|\leq R\}.$ Since $M$ is maximally totally real and $h_{\infty}=(\frac{\pi}{2})^m$ on $M$ we conclude that $h_{\infty}\equiv(\frac{\pi}{2})^m$ on $\Omega$. Since this argument applies to every subsequence of $h_n$ we obtain the following near diagonal asymptotics:
   \begin{thm}\label{thm3} Let $K,\varphi_n$ be as in Theorem \ref{thm1} and $z\in K$ fixed. Then in normal coordinates centered at $z$ 
     \begin{equation}\label{near}
\frac{n^{-m}K_n(n^{-\frac12}u,n^{-\frac12}\overline{v})\exp(-n(g_n(n^{-\frac12}u)+\overline{g_n(n^{-\frac12}\overline{v})})}{\det\Lambda_n\exp(2\langle \Lambda_n u,\overline{v}\rangle)}\to (\frac{\pi}{2})^m
  \end{equation}
  uniformly  on compact subsets of $\C_u\times \C_v$ as $n\to \infty.$
  \end{thm}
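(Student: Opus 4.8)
\emph{Sketch of the argument.} The plan is to upgrade the on-diagonal asymptotics of Theorem~\ref{thm1} to near-diagonal asymptotics by a normal families argument, using that the anti-diagonal $M:=\{(u,\overline u):u\in\C\}$ is a maximally totally real submanifold of $\C\times\C$. First I would fix $z\in K$; after a translation take $z=0$ and work in the normal coordinates $\zeta$ of Theorem~\ref{thm1}, with $g_n$ the associated holomorphic polynomial of degree $\leq 1$ and $\tilde\varphi_n(\zeta)=\varphi_n(\zeta)-\mathrm{Re}\,g_n(\zeta)=\sum_j\lambda_j^n|\zeta_j|^2+O(|\zeta|^3)$, $\Lambda_n=\mathrm{diag}[\lambda_1^n,\dots,\lambda_m^n]$. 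Let $h_n(u,v)$ denote the quotient on the left of (\ref{near}). Since $K_n$ is holomorphic in its first slot and conjugate-holomorphic in its second, and since the exponential carries the conjugate $\overline{g_n(n^{-1/2}\overline v)}$, each $h_n$ is holomorphic on $\C\times\C$.

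The first real step is a bound $\sup_n\sup_{(u,v)\in L}|h_n(u,v)|<\infty$ on every compact $L\subset\C\times\C$. I would apply Cauchy--Schwarz, $|K_n(w_1,w_2)|^2\leq K_n(w_1,w_1)K_n(w_2,w_2)$, with $w_1=n^{-1/2}u$ and $w_2=n^{-1/2}\overline v$; this bounds $|h_n(u,v)|^2$ by the product of the two diagonal quantities $n^{-m}K_n(n^{-1/2}u,n^{-1/2}u)e^{-2n\,\mathrm{Re}\,g_n(n^{-1/2}u)}$ and its analogue with $\overline v$ in place of $u$, divided by $(\det\Lambda_n)^2\exp(4\,\mathrm{Re}\langle\Lambda_n u,\overline v\rangle)$. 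Each diagonal quantity equals $n^{-m}B_n(\cdot)\exp(2n\tilde\varphi_n(\cdot))$; by assumption~(2) the cubic remainder gives $2n\tilde\varphi_n(n^{-1/2}u)=2\sum_j\lambda_j^n|u_j|^2+O(n^{-1/2}\norm|u|^3)=2\sum_j\lambda_j^n|u_j|^2+o(1)$ uniformly on compacts, so together with the extremal upper bound (\ref{gub}) (valid uniformly near $z$, its constants depending only on the bounds in (1)--(3)) each diagonal quantity is $O(\det\Lambda_n)\exp(2\sum_j\lambda_j^n|u_j|^2)$, respectively $O(\det\Lambda_n)\exp(2\sum_j\lambda_j^n|v_j|^2)$. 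Rewriting the denominator via the polarization identity $\mathrm{Re}\langle a,b\rangle=\tfrac12(|a|^2+|b|^2-|a-b|^2)$ with $a=\Lambda_n^{1/2}u$, $b=\Lambda_n^{1/2}\overline v$, the two large Gaussian factors cancel and one is left with $|h_n(u,v)|^2\leq C\exp\big(2\sum_j\lambda_j^n|u_j-\overline v_j|^2+o(1)\big)\leq C'\exp\big(2A'\sum_j|u_j-\overline v_j|^2\big)$, where $A'$ bounds the $\lambda_j^n$ from above by assumption~(3) and $C,C'$ do not depend on $n$ or on $z\in K$.

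Next, by Montel's theorem $\{h_n\}$ is a normal family on $\C\times\C$, and it suffices to show that every locally uniformly convergent subsequence $h_{n_k}\to h_\infty$ has limit $(\tfrac\pi2)^m$. On $M$, i.e.\ at $v=\overline u$, I would use $K_n(w,w)=B_n(w)e^{2n\varphi_n(w)}$, the splitting $\varphi_n=\mathrm{Re}\,g_n+\tilde\varphi_n$, assumption~(2), and the fact that $dd^c\varphi_n$ varies by $O(n^{-1/2}\norm)=o(1)$ over the ball of radius $n^{-1/2}|u|$ to reduce $h_{n_k}(u,\overline u)$ to $(1+o(1))\,n_k^{-m}B_{n_k}(n_k^{-1/2}u)/\det\Lambda_{n_k}$, which by Theorem~\ref{thm1} and $\det(dd^c\varphi_{n_k})=(\tfrac2\pi)^m\lambda_1^{n_k}\cdots\lambda_m^{n_k}$ converges to the constant $(\tfrac\pi2)^m$ in (\ref{near}); hence $h_\infty\equiv(\tfrac\pi2)^m$ on $M$. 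Since $M$ is real-analytic and maximally totally real ($\dim_{\Bbb R}M=2m=\dim_{\Bbb C}(\C\times\C)$), it is a uniqueness set for holomorphic functions: expanding $h_\infty-(\tfrac\pi2)^m$ in an everywhere-convergent power series $\sum c_{\alpha\beta}u^\alpha v^\beta$ and evaluating on $M$, linear independence of the monomials $u^\alpha\overline u^\beta$ forces every coefficient to vanish, so $h_\infty\equiv(\tfrac\pi2)^m$ on all of $\C\times\C$. As this holds along every convergent subsequence, $h_n\to(\tfrac\pi2)^m$ locally uniformly, which is (\ref{near}).

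I expect the main obstacle to be the uniform bound in the second step: the denominator $\exp(2\langle\Lambda_n u,\overline v\rangle)$ is honestly exponentially large in $|u|$ and $|v|$, and controlling $h_n$ off the diagonal hinges on the exact cancellation between it and the quadratic part of $n\tilde\varphi_n$ --- precisely what assumption~(2) makes quantitative --- together with the polarization identity, which isolates the harmless residual factor $\exp(2\sum_j\lambda_j^n|u_j-\overline v_j|^2)$. The remaining ingredients (the normal families extraction and the maximally-totally-real uniqueness principle) are routine, as is checking that all the $o(1)$ errors are uniform in $z\in K$.
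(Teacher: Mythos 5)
Your argument is correct and is essentially the paper's own proof: uniform boundedness of the holomorphic quotients $h_n$ via Cauchy--Schwarz, the extremal upper bound (\ref{gub}) and the polarization identity, followed by a normal-families extraction and the identification of the limit on the maximally totally real anti-diagonal $M=\{(u,\overline u)\}$ using Theorem \ref{thm1}. The only difference is that you spell out the $o(1)$ control of the cubic remainder and the uniqueness principle on $M$ in more detail than the paper does.
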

Off diagonal asymptotics of the Bergman kernel has been considered by various authors in different settings (eg. \cite{Christ,Lindholm,Delin,Berman}, see also \cite{BSZ,SZ2} for the line bundle setting). We adapt the methods of \cite{Lindholm,Delin} to our setting and obtain the following off-diagonal asymptotics of the Bergman kernel.
 \begin{thm}\label{thm2}
 Let $K,\varphi_n$ be as in Theorem \ref{thm1} then for every $z,w\in K$
\begin{equation} 
\frac{n^{-m}|K_n(z,w)|e^{-n\varphi_n(z)-n\varphi_n(w)}} {\sqrt{\det(dd^c\varphi^e_n(z))\det(dd^c\varphi^e_n(w))}}\leq Ce^{-T\sqrt{n}|z-w|}
\end{equation}
where $C,T>0$ are independent of $n$ (but depend on the lower bound of $dd^c\varphi_n).$
 \end{thm}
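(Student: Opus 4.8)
The plan is to deduce Theorem~\ref{thm2} from the diagonal asymptotics of Theorem~\ref{thm1} together with an exponential off–diagonal decay estimate obtained from a weighted $L^2$–estimate for $\overline{\partial}$ carrying an Agmon–type barrier weight, adapting the methods of \cite{Lindholm,Delin}. Since $K\Subset\mathcal B_n$ we have $\varphi^e_n=\varphi_n$ on $K$, so Theorem~\ref{thm1} and assumption~(3) give $K_n(z,z)e^{-2n\varphi_n(z)}\asymp n^m\det(dd^c\varphi^e_n(z))$ uniformly for $z\in K$, the implied constants independent of $n$. Hermitian symmetry of $K_n$ and Cauchy–Schwarz yield the trivial bound $|K_n(z,w)|^2e^{-2n\varphi_n(z)-2n\varphi_n(w)}\le \big(K_n(z,z)e^{-2n\varphi_n(z)}\big)\big(K_n(w,w)e^{-2n\varphi_n(w)}\big)$, so the content of the theorem is to gain a factor $e^{-2T\sqrt n|z-w|}$ over this. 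Writing $\kappa_w:=K_n(\cdot,w)/\sqrt{K_n(w,w)}$ for the $\|\cdot\|_{\varphi_n}$–normalized reproducing kernel and using $K_n(z,w)=\langle K_n(\cdot,w),K_n(\cdot,z)\rangle_{\varphi_n}$, one has the exact identity $|K_n(z,w)|^2e^{-2n\varphi_n(z)-2n\varphi_n(w)}=B_n(z)B_n(w)\,|\langle\kappa_z,\kappa_w\rangle_{\varphi_n}|^2$; in view of Theorem~\ref{thm1} the assertion is therefore equivalent to the near–orthogonality estimate $|\langle\kappa_z,\kappa_w\rangle_{\varphi_n}|\le Ce^{-T\sqrt n|z-w|}$ for $z,w\in K$, which is immediate when $|z-w|$ is bounded by a small multiple of $n^{-1/2}$; so we may assume $|z-w|$ larger.

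Fix $z,w\in K$ and set $r=|z-w|$. The first step is a pointwise bound, far from $w$, for a polynomial approximating $\kappa_w$. Let $\chi$ be a cut–off equal to $1$ on $\Delta^m(w,A_0/\sqrt n)$, supported in $\Delta^m(w,2A_0/\sqrt n)$, with $|\overline{\partial}\chi|\le C\sqrt n/A_0$, and let $\theta=\theta_z$ be smooth and real with $\theta\equiv0$ on $\Delta^m(z,1/\sqrt n)$, $\theta(\zeta)=\epsilon_0\sqrt n\,|\zeta-z|$ for $1/\sqrt n\le|\zeta-z|\le\rho_0$ (where $\Delta^m(z,\rho_0)\Subset U$ and $\epsilon_0>0$ is small), and $\theta$ continued as a bounded Lipschitz function with vanishing complex Hessian for $|\zeta-z|>\rho_0$. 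The slope bound $|d\theta|\le\epsilon_0\sqrt n$ and assumption~(3) make $i\partial\overline{\partial}(n\varphi_n+\theta)\ge c'\,n\,i\partial\overline{\partial}|\zeta|^2$ on $\overline U$ for $\epsilon_0$ small; away from $\overline U$ the weight is rendered admissible for Demailly's estimate exactly as in the proof of Theorem~\ref{thm1}, by passing to $(n-C)\varphi^e_n+\tfrac C2\log(1+|\zeta|^2)+\theta$ and retaining the factor $(1+|\zeta|^2)^{-(m+1)}$ of \cite[Theorem~5.1]{Dem82}, \cite[Theorem~5.4]{Berman}. Solving $\overline{\partial}u=(\overline{\partial}\chi)K_n(\cdot,w)$ against this weighted norm produces a holomorphic $u$ with $h:=\chi K_n(\cdot,w)-u\in\mathcal P_n$ (the degree bound via \eqref{growth} as in \cite[Lemma~5.5]{Berman}); since $\overline{\partial}\chi$ is supported where $\theta\approx\epsilon_0\sqrt n\min(r,\rho_0)$, the estimate gives $\int_\C|u|^2e^{-2n\varphi_n-2\theta}\,dV_m\lesssim e^{-2\epsilon_0\sqrt n\min(r,\rho_0)}K_n(w,w)$. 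On $\Delta^m(z,\delta/\sqrt n)$ both $\chi$ and $\theta$ vanish, so $h=-u$ there, and the sub–mean value inequality applied to $ue^{-ng_n}$, with the linear part $g_n$ of $\varphi_n$ at $z$ peeled off as in Theorem~\ref{thm1}, yields $|h(z)|^2e^{-2n\varphi_n(z)}\lesssim n^m\,e^{-2\epsilon_0\sqrt n\min(r,\rho_0)}K_n(w,w)$.

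To convert this into a bound for $\kappa_w(z)$ I would control the $L^2$–distance from $h$ to $K_n(\cdot,w)$. From $K_n(\cdot,w)-h=(1-\chi)K_n(\cdot,w)+u$, the reproducing identity $h(w)=K_n(w,w)-u(w)$, and a sub–mean value estimate for $u$ at $w$, one gets $\bigl\|h/\|h\|_{\varphi_n}-e^{i\alpha}\kappa_w\bigr\|_{\varphi_n}^2\lesssim \tau_w(A_0)$ for a suitable phase $\alpha$, where $\tau_w(A):=K_n(w,w)^{-1}\int_{|\zeta-w|>A/\sqrt n}|K_n(\zeta,w)|^2e^{-2n\varphi_n}\,dV_m$ is the normalized tail; applying sub–mean value at $z$ then gives $|\kappa_w(z)|^2e^{-2n\varphi_n(z)}\lesssim n^m\bigl(e^{-2\epsilon_0\sqrt n\min(r,\rho_0)}+\tau_w(A_0)\bigr)$, hence (using $B_n(z)\asymp n^m$) $|\langle\kappa_z,\kappa_w\rangle_{\varphi_n}|^2\lesssim e^{-2\epsilon_0\sqrt n\min(r,\rho_0)}+\tau_w(A_0)$. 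The remaining, essential point is that the tail decays exponentially, $\tau_w(A)\le Ce^{-cA}$ for $1\le A\lesssim\rho_0\sqrt n$ with $c,C$ independent of $n$ and $w$; this again rests on the barrier, now a truncated one centered at $w$, and is obtained by a bootstrap that upgrades the merely polynomial self–improvement $\tau_w(2A)\lesssim\tau_w(A)^{1/2}/A$ coming from a naive $\overline{\partial}$–estimate. Choosing $A_0$ a fixed (large) multiple of $\sqrt n\,r$ when $r\lesssim\rho_0$, and using truncation of the barrier together with the monotonicity $\tau_w(A_0)\le\tau_w(c''\rho_0\sqrt n)\lesssim e^{-c\rho_0\sqrt n}\le e^{-(c\rho_0/\operatorname{diam}K)\sqrt n\,r}$ when $r\gtrsim\rho_0$, one arrives in all cases at $|\langle\kappa_z,\kappa_w\rangle_{\varphi_n}|^2\lesssim e^{-2T\sqrt n\,r}$ for a suitable $T>0$ depending only on $\epsilon_0,\rho_0,c$ and $\operatorname{diam}K$, which is the theorem.

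The step I expect to be the main obstacle is exactly the last one: extracting \emph{exponential} (rather than merely polynomial) tail decay for $\tau_w$ with constants uniform in $n$ and in $w\in K$, while keeping the barrier weight simultaneously admissible for Demailly's estimate on all of $\C$ — where assumption~(3) is unavailable, so one must work with $(n-C)\varphi^e_n+\tfrac C2\log(1+|\zeta|^2)$ and truncate the barrier at the fixed radius $\rho_0$ — and of slope $\asymp\sqrt n$ across the relevant part of the bulk. This is where the interaction between the truncated barrier, the growth condition \eqref{growth}, and the cubic–remainder bound governed by assumption~(2) has to be tracked carefully; an alternative would be to feed in a quantitative, Gaussian–tail refinement of the near–diagonal asymptotics of Theorem~\ref{thm3} in place of the bootstrap.
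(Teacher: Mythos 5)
Your overall strategy (an Agmon-type barrier fed into a weighted $\overline{\partial}$-estimate, following Lindholm and Delin) belongs to the same family of ideas as the paper's proof, and your treatment of the regime $|z-w|\lesssim n^{-1/2}$ and the reduction to $|\langle\kappa_z,\kappa_w\rangle_{\varphi_n}|\leq Ce^{-T\sqrt{n}|z-w|}$ are fine. But the argument as written has a genuine gap, and it is exactly the one you flag yourself: the exponential tail bound $\tau_w(A)\leq Ce^{-cA}$ is never established. That bound is essentially an integrated form of the theorem being proved (it is equivalent to the pointwise decay up to sub-mean value inequalities), so deferring it makes the argument circular in effect; and the self-improvement $\tau_w(2A)\lesssim\tau_w(A)^{1/2}/A$ you propose to bootstrap can only ever produce polynomial decay in $A$ (writing $f=-\log\tau_w$, the iteration $f(2A)\geq f(A)/2+\log A-O(1)$ stabilizes at $f(A)\approx 2\log A$). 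The ``truncated barrier centered at $w$'' that would upgrade this to exponential decay is precisely the hard analytic step, and it is not carried out. The reason you are forced to need $\tau_w$ at all is structural: you solve $\overline{\partial}$ against the \emph{modified} weight $n\varphi_n+\theta$ via Demailly's theorem, so your correction $u$ is not orthogonal to $\mathcal{P}_n$ in the original norm, $h=\chi K_n(\cdot,w)-u$ is not the Bergman projection of $\chi K_n(\cdot,w)$, and you must then compare $h$ with $K_n(\cdot,w)$ in $L^2$ --- which is where the tail enters.

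The paper avoids this entirely by working at $z$ rather than at $w$ and by keeping the weight fixed. With $\delta=|z-w|/2$ and $\chi$ vanishing on $B(z,\delta/2)$ and equal to $1$ off $B(z,\delta)$, the sub-mean value inequality reduces the claim to bounding $I_n(z)=\int_{|z-\zeta|>\delta}|K_n(z,\zeta)|^2e^{-2n\varphi_n}dV_m$, and one has the exact duality $I_n(z)\leq\sup_g|\Pi_n(g\chi)(z)|^2=\sup_g|u(z)|^2$, where $u=g\chi-\Pi_n(g\chi)$ is the \emph{minimal} solution of $\overline{\partial}u=\overline{\partial}(g\chi)$ in $L^2(e^{-2n\varphi_n})$ and $\chi(z)=0$. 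Berndtsson's weighted estimate (\cite[Theorem 4]{Berndtsson}) applies to this minimal solution of the \emph{unweighted} problem and gives $\int|u|^2e^{-2n\varphi_n}\omega\leq\int|\overline{\partial}(g\chi)|^2_{\Omega}e^{-2n\varphi_n}\omega$ for the Agmon weight $\omega=e^{-T\sqrt{n}\,\mathrm{dist}(\cdot,B(z,\delta/3))}$, the curvature hypothesis $i\partial\overline{\partial}\omega\leq\omega(i\partial\overline{\partial}n\varphi_n-\Omega)$ being supplied exactly by assumption (3). Since $\omega\equiv1$ near $z$ while $\omega\leq e^{-T\sqrt{n}\delta/6}$ on the support of $\overline{\partial}\chi$, the exponential gain comes out in a single application, with no tail estimate at $w$, no comparison between an approximate kernel and the true one, and no bootstrap. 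To repair your route you should either switch to this duality-plus-Berndtsson mechanism, or actually prove the exponential decay of $\tau_w$ by a barrier argument \`a la Delin --- but be aware that this is the bulk of the work, not a remark to be appended at the end.
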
 
  \begin{proof}
  \textbf{Case 1:} Assume that $|z-w|\leq\frac{8}{\sqrt{n}}.$ Consider  
  $$F_n(w):=\overline{K_n(z,w)}$$ which is holomorphic in $w.$ As in (\ref{ub}) by sub-mean value inequality we have 
  $$|F_n(w)|^2e^{-2n\varphi_n(w)}\leq \frac{n^m\exp(\epsilon_{n})}{E[c\log n]^m}(\frac{\pi}{2})^m\det(dd^c\varphi_n(w))\|F_n\|_{\varphi_n}^2$$
  where 
  \begin{eqnarray*} \|F_n\|_{\varphi_n}^2 &=&\int_{\C}|K_n(z,\zeta)|^2e^{-2\varphi_n}(\zeta)dV_m(\zeta)
  = K_n(z,z) \\ &=&\sup_{g\in \mathcal{P}_n, \|g\|_{\varphi_n}=1}|g(z)|^2\\
  &\leq& \frac{n^m\exp(\epsilon_{n})}{E[c\log n]^m}e^{2n\varphi_n(z)}(\frac{\pi}{2})^m\det(dd^c\varphi_n(z)).
  \end{eqnarray*}
  Combining these estimates we get,
  $$|K_n(z,w)|^2e^{-2n\varphi_n(w)}\leq \frac{n^{2m}\exp(2\epsilon_{n})}{E[c\log n]^{2m}}e^{2n\varphi_n(z)}(\frac{\pi}{2})^m\det(dd^c\varphi_n(z))(\frac{\pi}{2})^m\det(dd^c\varphi_n(w))$$
  hence,
  $$\frac{n^{-m}|K_n(z,w)|e^{-n\varphi_n(z)-n\varphi_n(w)}}{\sqrt{\det(dd^c\varphi_n(z))\det(dd^c\varphi_n(w))}}\leq C$$
  for some $C>0$ independent of $n$ and $z,w\in K.$\\
  
    \textbf{Case 2:} Now, we assume that $|z-w|>\frac{8}{\sqrt{n}}.$ By translation we may also assume that $w=0$ also by taking $n$ large we assume that $\Delta^m(z,\frac{1}{\sqrt{n}})$ is contained in the open set $U$ where $K\subset U\subset \mathcal{B}_n.$ Then as in case 1 we start with
    $$n^{-m}|F_n(0)|^2e^{-2n\varphi_n(0)}\leq \frac{\int_{\Delta^m(0,\frac{1}{\sqrt{n}})}|K_n(z,\zeta)|^2e^{-2n\varphi_n(\zeta)}dV_m(\zeta)}{n^m \int_{\Delta^m(0,\frac{1}{\sqrt{n}})}e^{-2n\tilde{\varphi}_n(\zeta)}dV_m(\zeta)} .$$
    The integral in the denominator can be estimated by $\exp(\epsilon_n)\det(dd^c\varphi_n(w)).$ In order to estimate the numerator let $\delta:=\frac{|z-w|}{2}>\frac{4}{\sqrt{n}}$ then
    $\Delta^m(0,\frac{1}{\sqrt{n}})\subset \{\zeta\in \C:|z-\zeta|>\delta\}$ and it is enough to estimate
    $$I_n(z):=\int_{|z-\zeta|>\delta}|K_n(z,\zeta)|^2e^{-2n\varphi_n(\zeta)}dV_m(\zeta).$$
    Let $\chi$ be a smooth non-negative function such that $\chi\equiv 1\ \text{on}\ \C\backslash B(z,\delta); \chi\equiv 0\ \text{on} \ B(z,\frac{\delta}{2})$ and $|\overline{\partial}\chi(\zeta)|^2\leq C_1\frac{\chi(\zeta)}{\delta^2}$ for some $C_1>0.$ Then
    \begin{eqnarray*}
    I_n(z) &\leq & \int_{\C}|K_n(z,\zeta)|^2\chi(\zeta)e^{-2n\varphi_n(\zeta)}dV_m(\zeta)\\
    &=& \sup_{g\in\mathcal{P}_n, \int_{\C}|g|^2\chi e^{-2n\varphi_n}dV_m=1} |\Pi_n(g\chi)(z)|^2
    \end{eqnarray*}
    where 
    $$\Pi_n(g\chi)(z):=\int_{\C}K_n(z,\zeta)g(\zeta)\chi(\zeta)e^{-2n\varphi_n(\zeta)}dV_m(\zeta)$$
  i.e. $\Pi_n$ denotes the Bergman projection form the weighted $L^2$ space of functions onto $\mathcal{P}_n$ defined by the kernel $K_n(z,w).$ Then writing 
  \begin{equation}\label{u}
  \Pi_n(g\chi)=g\chi-u
  \end{equation} we see that $\overline{\partial}(g\chi)=\overline{\partial}u$ and $u$ is orthogonal to $\mathcal{P}_n,$ that is $u$ is the solution of $\overline{\partial}$-equation with minimal $L^2$-norm. Moreover, since $\chi(z)=0$ we have 
    $$|\Pi(g\chi)(z)|^2=|u(z)|^2.$$
Now, using the fact that $u$ is holomorphic in $B(z,\frac{\delta}{2})$ and (\ref{ub}) we have
\begin{eqnarray*}
n^{-m}|u(z)|^2e^{-2n\varphi_n(z)} \leq \frac{\int_{\Delta^m(z,\frac{1}{\sqrt{n}})}|u(\zeta)|^2e^{-2n\varphi_n(\zeta)}dV_m(\zeta)}{n^m\int_{\Delta^m(z,\frac{1}{\sqrt{n}})}e^{-2n\tilde{\varphi}_n(\zeta)}dV_m(\zeta)}.
\end{eqnarray*}
where again the denominator can be estimated by $\exp(\epsilon_n)\det( dd^c(\varphi_n(z)).$ 

Next, we let $\Omega:=\frac{i}{2}\partial\overline{\partial}n\varphi_n$ and $\tau(\zeta):=\sqrt{n}Tdist(\zeta,B(z,\frac{\delta}{3}))$ where $T>\max(1,\sqrt{2c})$. Letting $\omega(\zeta):=e^{-\tau(\zeta)}$ by Leibniz rule we have
$$i\partial\overline{\partial}\omega=\omega i\partial \tau \wedge \overline{\partial}\tau\leq \frac{nT^2}{4}\omega i \partial \overline{\partial}|\zeta|^2$$
and using the assumption
$ci\partial\overline{\partial}|z|^2\leq i\partial\overline{\partial}\varphi_n(z)$ we get
$$i\partial\overline{\partial}\omega\leq \omega(i\partial\overline{\partial}n\varphi_n-\Omega).$$
Now, for every $g\in\mathcal{P}_n$ satisfying $\int|g|^2\chi e^{-2n\varphi_n}dV_m=~1$ and $u$ as in (\ref{u})  applying \cite[Theorem 4]{Berndtsson} we obtain
\begin{eqnarray*}
\int_{\Delta^m(z,\frac{1}{\sqrt{n}})}|u(\zeta)|^2e^{-2n\varphi_n(\zeta)}dV_m(\zeta) &\leq& \int_{B(z,\frac{\sqrt{m}}{\sqrt{n}})}|u(\zeta)|^2e^{-2n\varphi_n(\zeta)}\omega(\zeta)dV_m(\zeta)\\
&\leq&  \int_{B(z,\frac{\sqrt{m}}{\sqrt{n}})}|\overline{\partial}u|_{\Omega}^2e^{-2n\varphi_n(\zeta)}\omega(\zeta)dV_m(\zeta)\\
&\leq& \int_{\C\backslash B(z,\frac{\delta}{2})}|g(\zeta)\overline{\partial}\chi(\zeta)|^2_{\Omega}e^{-2n\varphi_n(\zeta)}\omega(\zeta)dV_m(\zeta)
\end{eqnarray*}
where $|\Phi|_{\Omega}$ denotes the norm of $(0,1)$ form $\Phi$ with respect to the metric defined by $\Omega$ which in turn can be estimated from above by Euclidean norm times $\frac{1}{cn}.$ Then the latter integral can be estimated by
\begin{eqnarray*}
&\leq& \frac{C_1}{\delta^2 cn}  \int_{\C\backslash B(z,\frac{\delta}{2})}|g(\zeta)|^2\chi(\zeta)e^{-2n\varphi_n(\zeta)}e^{-\frac{T\delta\sqrt{n}}{6}}dV_m(\zeta)\\
&\leq& \frac{C_1}{c\delta^2 n}e^{-\frac{T\delta\sqrt{n}}{6}}  \int_{\C}|g(\zeta)|^2\chi(\zeta)e^{-2n\varphi_n(\zeta)}dV_m(\zeta)\\
&=&  \frac{C_1}{c\delta^2n}e^{-\frac{T\delta\sqrt{n}}{6}}\\
&\leq& \frac{C_1}{16c} e^{-\frac{T\sqrt{n}|z-w|}{12}}.
\end{eqnarray*}
Finally, tracking back the inequalities we obtain
$$n^{-2m}|F_n(w)|^2\exp(-2n(\varphi_n(w)+\varphi_n(z)))\leq \det(dd^c\varphi_n(w))\det(dd^c\varphi_n(z))\exp(2\epsilon_n)\frac{C_1}{16c}e^{-\frac{T\sqrt{n}|z-w|}{12}} .$$
  \end{proof}
 \section{Asymptotic normality of random zeros} 
Let $\{g_j\}_{j\in\Bbb{N}}$ be a sequence of complex-valued measurable functions on a measure space $(X,\mu)$ such that
$$\sum_{j}|g_j(x)|^2=1\ \text{for every}\ x\in X.$$ 
A \textit{complex Gaussian process} is a complex-valued random function of the form
$$G(x)=\sum_{j}c_jg_j(x)$$ where $c_j$ are i.i.d. complex Gaussian random variables of mean zero and variance one. We also define the correlation function $\rho:X\times X\to \Bbb{C}$ of $G(x)$ by
$$\rho(x,y):=\Bbb{E}\{G(x)\overline{G(y)}\}=\sum_{j}g_j(x)\overline{g_j(y)}.$$ Note that $|\rho(x,y)|\leq 1$ and $\rho(x,x)=1$ for every $x,y\in X.$

\par For a sequence of complex Gaussian processes $G_1,G_2,\dots$ and a bounded measurable function $\psi:X\to \Bbb{R}$ one can define
$$Z_n^{\psi}(G_n):=\int_X\log|G_n(x)|\psi(x)d\mu(x).$$ 
Next result provides sufficient conditions on the asymptotic normality of linear statistics of $Z_n$.
\begin{thm}\cite{STr}\label{ST}
Let $\rho_n(x,y)$ denote the correlation function for $G_n.$ Assume that
 \begin{equation}\label{c1}\lim_{n\to \infty}\sup_{x\in X}\int_X|\rho_n(x,y)|d\mu(y)=0\end{equation}
\begin{equation}\label{c2}
\liminf_{n\to \infty}\frac{\int_X\int_X|\rho_n(x,y)|^2\psi(x)\psi(y)d\mu(x)d\mu(y)}{\sup_{x\in X}\int_X|\rho_n(x,y)|d\mu(y)}>0.
\end{equation}
Then linear statistics
$$\frac{Z_n^{\psi}(G_n)-\Bbb{E} Z_n^{\psi}(G_n)}{\sqrt{Var[Z_n^{\psi}(G_n)]}}$$ converges in distribution to the (real) Gaussian $ \mathcal{N}(0,1)$ as $n\to \infty.$
\end{thm}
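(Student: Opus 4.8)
The plan is to prove the theorem by the method of cumulants, exploiting the fact that the standard real Gaussian is the unique distribution with mean $0$, variance $1$, and all cumulants of order $\geq 3$ equal to zero. Write $\xi_n := Z_n^{\psi}(G_n) - \mathbb{E}\,Z_n^{\psi}(G_n)$, $\sigma_n^2 := \mathrm{Var}[\xi_n]$, $\hat\xi_n := \xi_n/\sigma_n$, and set $s_n := \sup_{x\in X}\int_X |\rho_n(x,y)|\,d\mu(y)$, so that (\ref{c1}) reads $s_n\to 0$. Since $\rho_n(x,x)=1$, each $G_n(x)$ is a standard complex Gaussian, so $\log|G_n(x)|$ has finite moments of all orders: its lower tail satisfies $\mathbb{P}(\log|G_n(x)|<-t)=\mathbb{P}(|G_n(x)|<e^{-t})=O(e^{-2t})$ and its upper tail is doubly exponentially small. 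As $\psi$ is bounded and $\mu$ may be taken finite on $\mathrm{supp}\,\psi$, all cumulants $\kappa_p(\xi_n)$ are finite and the reduction is legitimate. It then suffices to show $\kappa_p(\hat\xi_n)\to 0$ for every $p\geq 3$, since $\kappa_1(\hat\xi_n)=0$ and $\kappa_2(\hat\xi_n)=1$ by construction, and to invoke the classical fact that convergence of all cumulants to those of $\mathcal{N}(0,1)$ forces convergence in distribution.

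First I would pin down the variance. Expanding $\log|G_n(x)|$ in the Laguerre system $L_m(|G_n(x)|^2)$ adapted to the standard complex Gaussian — equivalently using the Frullani representation $\log|G_n(x)|-\mathbb{E}\log|G_n(x)| = -\tfrac12\int_0^\infty (e^{-s|G_n(x)|^2}-\mathbb{E}\,e^{-s|G_n(x)|^2})\,\tfrac{ds}{s}$ — and using that the Laguerre polynomials diagonalise the Gaussian correlation operator, one obtains the exact covariance identity
$$\mathrm{cov}\big(\log|G_n(x)|,\log|G_n(y)|\big)=\frac14\sum_{m\geq 1}\frac{|\rho_n(x,y)|^{2m}}{m^2}.$$
Integrating against $\psi(x)\psi(y)$ gives $\sigma_n^2=\tfrac14\sum_{m\geq 1}m^{-2}\iint |\rho_n|^{2m}\psi\psi\,d\mu\,d\mu$, whose leading ($m=1$) term is comparable to $\iint|\rho_n|^2\psi\psi$, the higher terms being dominated since $|\rho_n|\leq 1$. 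Thus (\ref{c2}) is precisely the statement that $\sigma_n^2\geq c\,s_n$ for some $c>0$ and all large $n$, the lower bound I will need in the denominator.

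The crux is the estimate $|\kappa_p(\xi_n)|\leq C_p\, s_n^{\,p-1}$ for every $p\geq 2$. Writing
$$\kappa_p(\xi_n)=\int_{X^p}\mathrm{cum}\big(\log|G_n(x_1)|,\dots,\log|G_n(x_p)|\big)\prod_{i=1}^p\psi(x_i)\,d\mu(x_i),$$
I would control the integrand by the diagram (Wick) technique: expanding each factor in its Laguerre/chaos series, the joint cumulant is a sum over \emph{connected} diagrams on $\{1,\dots,p\}$, each contributing a product of correlation factors $\rho_n(x_i,x_j)$ or $\overline{\rho_n(x_i,x_j)}$ along its edges, weighted by the expansion coefficients. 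Connectedness forces at least $p-1$ edges, so every term carries at least $p-1$ factors bounded by $|\rho_n|$. Choosing a spanning tree and integrating out its leaves one at a time, each step produces a factor $\int_X|\rho_n(x_i,x_j)|\,d\mu(x_j)\leq s_n$; after $p-1$ such steps and a final $\int_X|\psi|\,d\mu$ one arrives at $|\kappa_p(\xi_n)|\leq C_p s_n^{p-1}$, with $C_p$ absorbing $\|\psi\|_\infty^p$, $\mu(\mathrm{supp}\,\psi)$, and the combinatorial count of diagrams. I expect this cumulant bound to be the main obstacle: the delicate point is not the tree integration but verifying that the sum over diagrams and over chaos orders converges, i.e. that the growth in the number of connected diagrams is beaten by the decay $O(1/m)$ of the Laguerre coefficients of $\tfrac12\log t$; the Frullani/quadratic-form representation is the cleanest route, since the joint cumulants of the exponentials $e^{-s_i|G_n(x_i)|^2}$ admit explicit determinantal expressions.

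Finally I would combine the two estimates. For $p\geq 3$,
$$|\kappa_p(\hat\xi_n)|=\frac{|\kappa_p(\xi_n)|}{\sigma_n^{\,p}}\leq \frac{C_p\, s_n^{\,p-1}}{(c\,s_n)^{p/2}}=\frac{C_p}{c^{p/2}}\,s_n^{(p-2)/2},$$
which tends to $0$ as $n\to\infty$ by (\ref{c1}), since $(p-2)/2>0$. Together with $\kappa_1(\hat\xi_n)=0$ and $\kappa_2(\hat\xi_n)=1$, this shows that every cumulant of $\hat\xi_n$ converges to the corresponding cumulant of $\mathcal{N}(0,1)$, and the method of moments yields convergence of $\hat\xi_n$ in distribution to $\mathcal{N}(0,1)$, as claimed.
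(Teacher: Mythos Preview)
The paper does not supply its own proof of Theorem~\ref{ST}: the result is quoted from Sodin--Tsirelson \cite{STr} and used as a black box, with only the remark that condition~(\ref{c1}) forces $\mathrm{Var}[Z_n^{\psi}]\to 0$. Your proposal is a faithful sketch of the original Sodin--Tsirelson argument---the ``diagram technique'' the author mentions in the introduction---so there is nothing to compare against in this paper beyond noting that your approach is precisely the cited one.

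One small point worth tightening in your write-up: the passage ``the higher terms being dominated since $|\rho_n|\leq 1$\ldots thus (\ref{c2}) is precisely the statement that $\sigma_n^2\geq c\,s_n$'' is a little quick when $\psi$ changes sign, since the $m\geq 2$ contributions $\iint|\rho_n|^{2m}\psi\psi$ are not obviously $o(s_n)$ individually. The clean way (as in \cite{STr}) is to observe that for $m\geq 2$ one has $\iint|\rho_n|^{2m}\,d\mu\,d\mu\leq s_n\cdot\sup_x\int|\rho_n|^{2m-1}\,d\mu\leq s_n\cdot s_n$ by iterating the bound $\int|\rho_n|^k\,d\mu(y)\leq s_n$ for $k\geq 1$, after first peeling off one factor of $|\rho_n|$; alternatively, invoke the cumulant bound you prove for general $p$ in the special case $p=2$ applied to the tail of the chaos expansion. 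Either way one gets $\sigma_n^2=\tfrac14\iint|\rho_n|^2\psi\psi+O(s_n^2)$, from which (\ref{c2}) indeed yields $\sigma_n^2\geq c\,s_n$ for large $n$. With that clarified, your outline is correct.
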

We remark that condition \ref{c1} ensures that $\lim_{n\to \infty}Var[Z_n^{\psi}(G_n)]=0$ (c.f \cite{STr,SZ3} see also \cite[Lemma 5.2]{B6} for more general distributions).
\begin{proof}[Proof of Theorem \ref{CLT}]
Note that $\mathcal{Z}^{\Phi}_n=\langle [Z_{f_n}],\Phi\rangle= \int_{\C}\log|f_n|dd^c\Phi$ where $dd^c\Phi=\psi(z) dV_m$ for some $\mathscr{C}^1$ function $\psi$ which has compact support in $\mathcal{B}_n$ for each $n.$ We will verify the conditions of Theorem \ref{ST} for the Gaussian processes 
$$\mathcal{Z}_n^{\psi}(f_n)=\int_X\log|f_n|\psi(z)dV_m$$ 
with $X=supp(\psi)$ and $\mu$ is restriction of $dV_m$ to $X.$
\par First, we show that (\ref{c1}) holds. Note that
$$|\rho_n(z,w)|=\frac{|K_n(z,w)|}{\sqrt{K_n(z,z)}\sqrt{K_n(w,w)}}.$$
We fix $z\in X$ and estimate 
\begin{eqnarray*}
\int_{X}|\rho_n(z,w)|dV_m(w)&=&\int_{\{w\in X:|z-w|<\frac{\log n}{\sqrt{n}}\}}\ + \ \int_{\{w\in X:{|z-w|\geq\frac{\log n}{\sqrt{n}}}\}}\\
&=:& I_n^1(z)+I_n^2(z)
\end{eqnarray*}
Using $|\rho_n(z,w)|\leq 1$ we see that $$I_n^1(z)\leq C(\frac{\log n}{\sqrt{n}})^{2m}$$ for every $z\in X.$
Moreover, by Theorem \ref{thm2} and by (\ref{help})
\begin{eqnarray*}
I_n^2(z)\leq \frac{CVol_m(X)}{n^T}(1-C_8\frac{(\log n)^{m-1}}{\sqrt{n}})^{-1}. 
\end{eqnarray*}  Hence, we conclude that (\ref{c1}) holds.

\par In order to verify condition (\ref{c2}) we break up the integral again into two pieces where $|z-w|<\frac{\log n}{\sqrt{n}}$ and $|z-w|\geq\frac{\log n}{\sqrt{n}}.$ It follows from Theorem \ref{thm2} that the latter integral decay rapidly to 0. Hence, it is enough to estimate 
\begin{eqnarray}\label{equ}
& &\liminf_{n\to \infty}\frac{\int_{X}(\int_{|v|<\log n}|\rho_n(z,z+\frac{v}{\sqrt{n}})|^2\psi(z+\frac{v}{\sqrt{n}})dV_m(v))\psi(z)dV_m(z)}{\sup_{z\in X}\int_{|v|<\log n}|\rho_n(z,z+\frac{v}{\sqrt{n}})|dV_m(v) }.
\end{eqnarray} 
On the other hand, it is an immediate consequence of Theorem \ref{thm3} and the identity
$$\langle u,v\rangle-\frac12|u|^2-\frac12|v|^2=-\frac12|u-v|^2+iIm(\langle u,v\rangle) $$ that as $n\to \infty$
\begin{equation}
\frac{n^{-2m}|K_n(z+\frac{u}{\sqrt{n}},z+\frac{v}{\sqrt{n}})|^2e^{-2n\varphi_n(z+\frac{u}{\sqrt{n}})}e^{-2n\varphi_n(z+\frac{v}{\sqrt{n}})}}{\det(dd^c(\varphi_n(z))^2\exp(-\sum_{j=1}^m\lambda_j^n|u_j-v_j|^2)}\to1
\end{equation} 
 where $\lambda_j^n$ are eigenvalues of $dd^c\varphi_n(z).$ Hence, by Theorem \ref{thm1} and the preceding argument for Theorem \ref{thm3} we obtain
\begin{equation}
\rho_n(z+\frac{u}{\sqrt{n}},z+\frac{v}{\sqrt{n}})=\exp(-\frac12\sum_{j=1}^m\lambda_j^n|u_j-v_j|^2)(1+\tau_n(u,v))
\end{equation}
where $|\tau_n(u,v)|\leq A|u-v|^2 \frac{(\log n)^3}{\sqrt{n}}$ for $|u|,|v|<\log n.$  Then the limit (\ref{equ}) can be estimated below by 
\begin{eqnarray}\label{son}
& & \liminf_{n\to \infty}\frac{\int_{X}(\int_{|v|<\log n}\exp(-\sum_{j=1}^m\lambda_j^n|v_j|^2)(1+\tau_n(0,v))\psi(z+\frac{v}{\sqrt{n}})dV_m(v))\psi(z)dV_m(z)}{\int_{|v|<\log n}\exp(-\frac12\sum_{j=1}^m\lambda_j^n|v_j|^2)(1+\tau_n(0,v))^{\frac12}dV_m(v)}.
 \end{eqnarray}
Since $\psi$ is $\mathscr{C}^1$ we have $\psi(z+\frac{v}{\sqrt{n}})=\psi(z)+O(|v|/\sqrt{n}).$ Hence, (\ref{son}) is bounded below by
\begin{eqnarray*}
& &\liminf_{n\to \infty}\frac{\int_{X}(\int_{|v|<\log n}\exp(-\sum_{j=1}^m\lambda_j^n|v_j|^2)(1-C|v|\frac{(\log n)^3}{\sqrt{n}})(\psi(z)+O(v/\sqrt{n}))dV_m(v))\psi(z)dV_m(z)}{\int_{|v|<\log n}\exp(-\frac12\sum_{j=1}^m\lambda_j^n|v_j|^2)(1+C|v|\frac{(\log n)^3}{\sqrt{n}})^{\frac12}dV_m(v)}\\
&=& 2^{-m} \int_{\C}\psi^2dV_m>0.
\end{eqnarray*}
\end{proof}
  \bibliographystyle{alpha}
\bibliography{biblio}
\end{document}